\documentclass[11pt,reqno]{amsart}

\usepackage{amsmath,amssymb,mathrsfs}
\usepackage{graphicx,cite,times}

\setlength{\topmargin}{-1.5cm}
\setlength{\oddsidemargin}{0.0cm}
\setlength{\evensidemargin}{0.0cm}
\setlength{\textwidth}{16.7cm}
\setlength{\textheight}{23cm}
\headheight 20pt
\headsep    26pt
\footskip 0.4in

\newtheorem{theo}{Theorem}[section]

\newtheorem{lemm}[theo]{Lemma}

\newtheorem{rema}[theo]{Remark}

\numberwithin{equation}{section}

\def\beq{\begin{equation}}
\def\eeq{\end{equation}}
\def\bea{\begin{eqnarray}}
\def\eea{\end{eqnarray}}
\def\ba{\begin{align}}
\def\ea{\end{align}}

\def\O{\Omega}

\def\na{\nabla}
\def\p{\partial}

\def\ol{\overline}

\begin{document}

\title[increasing stability]{Increasing stability for the inverse source
scattering problem with multi-frequencies}

\author{Peijun Li}
\address{Department of Mathematics, Purdue University, West Lafayette, Indiana
47907, USA.}
\email{lipeijun@math.purdue.edu}

\author{Ganghua Yuan}
\address{KLAS, School of Mathematics and Statistics, Northeast Normal University,
Changchun, Jilin, 130024, China}
\email{yuangh925@nenu.edu.cn}
\thanks{MSC: 35R30, 78A46.}
\thanks{ The research of PL was supported in part by the NSF grant DMS-1151308.
The research of GY was supported in part by NSFC grants 10801030, 11271065,
11571064, the Ying Dong Fok Education Foundation under grant 141001, and the
Fundamental Research Funds for the Central Universities under grant
2412015BJ011.}

\keywords{stability, inverse source problem, Helmholtz equation,
Partial differential equation}

\begin{abstract}
Consider the scattering of the two- or three-dimensional Helmholtz equation
where the source of the electric current density is assumed to be
compactly supported in a ball. This paper concerns the stability analysis of the
inverse source scattering problem which is to reconstruct the source function.
Our results show that increasing stability can be obtained for the inverse
problem by using only the Dirichlet boundary data with multi-frequencies.
\end{abstract}

\maketitle

\section{Introduction and problem statement}

In this paper, we consider the following Helmholtz equation:
\begin{equation}\label{sol}
\Delta u(x)+ \kappa^2 u(x)=f(x), \quad x\in\mathbb{R}^{d},
\end{equation}
where $d=2$ or $3$, the wavenumber $\kappa>0$ is a constant, $u$ is
the radiated wave field, and $f$ is the source of the electric current density
which is assumed to have a compact support. Denote
by $B_{\rho}=\{x\in\mathbb{R}^d: |x|<\rho\}$ the ball with radius
$\rho>0$ and center at the original. Let $R>0$ be a constant which is large
enough such that $B_R$ contains the support of $f$. Let $\p B_R$ be the
boundary of $B_R$. The following Sommerfeld radiation condition is required to
ensure the uniqueness of the wave field $u$:
\begin{equation}\label{rc}
\lim_{r\to\infty}r^{\frac{d-1}{2}}(\p_r u-{\rm
i}\kappa u)=0,\quad r=|x|,
\end{equation}
uniformly in all directions $\hat{x}=x/|x|$.

For a given function $u$ on $\p B_R$ in two dimensions, it has the
Fourier series expansion
\[
 u(R, \theta)=\sum_{n\in\mathbb{Z}}\hat{u}_n(R)e^{{\rm i}n\theta},\quad 
\hat{u}_n(R)=\frac{1}{2\pi}\int_0^{2\pi}u(R, \theta)e^{-{\rm
i}n\theta}{\rm d}\theta.
\]
We may introduce the Dirichlet-to-Neumann (DtN) operator $\mathscr{B}:
H^{1/2}(\p B_R)\to H^{-1/2}(\p B_R)$ given by 
\[
 (\mathscr{B}u)(R, \theta)=\kappa\sum_{n\in\mathbb{Z}}
\frac{H_n^{(1)'}(\kappa R)}{H_n^{(1)}(\kappa R)}\hat{u}_n(R) e^{{\rm
i}n\theta}.  
\]
For a given function $u$ on $\p B_R$ in three dimensions, it has the Fourier
series expansion:
\[
 u(R, \theta, \varphi)=\sum_{n=0}^\infty\sum_{m=-n}^n \hat{u}_n^m(R)
Y_n^m(\theta, \varphi),\quad \hat{u}_n^m(R)=\int_{\p B_R} u(R,
\theta, \varphi)\bar{Y}_n^m(\theta, \varphi){\rm d}\gamma.
\]
We may similarly introduce the DtN operator ${\mathscr B}: H^{1/2}(\p B_R)\to
H^{-1/2}(\p B_R)$ as follows:
\[
 (\mathscr{B}u)(R, \theta, \varphi)=\kappa\sum_{n=0}^\infty\sum_{m=-n}^n
\frac{h_n^{(1)'}(\kappa R)}{h_n^{(1)}(\kappa
R)}\hat{u}_n^m(R)Y_n^m(\theta, \varphi). 
\]
Here $H_n^{(1)}$ is the Hankel function of the first kind with order zero,
$h_n^{(1)}$ is the spherical Hankel function of the first kind with order zero,
$Y_n^m$ is the spherical harmonics of order $n$, and the bar denotes the
complex conjuate. Using the DtN operator, we can reformuate the Sommerfeld
radiation condition into a transparent boundary condition 
\[
\p_{\nu}u={\mathscr B}u\quad\text{on} ~ \p B_R,
\]
where $\nu$ is the unit outer normal on $\p B_R$. Hence one can also obtain the
Neumann data on $\p B_R$ once the Dirichlet date is available on $\p B_R$. Now
we are in the position to discuss our inverse source problem:

{\bf IP.} {\em Let $f$ be a complex function with a compact support contained in
$B_R$. The inverse problem is to determine $f$ by using the boundary observation
data $u(x,\kappa)|_{\p B_R}$ with an interval of frequencies $\kappa\in
(0,K)$ where $K>1$ is a positive constant.}

The inverse source problem has significant applications in medical and
biomedical imaging \cite{I-89}, and various tomography problems \cite{Ar,
StUh}. In this paper, we study the stability of the above inverse problem.
As is known, the inverse source problem does not have a unique solution at a
single frequency \cite{DS-IEEE82, HKP-IP05}. Our goal is to establish increasing
stability of the inverse problems with multi-frequencies. We refer to
\cite{BLT-JDE10, CIL} for increasing stability analysis of the inverse source
scattering problem. In \cite{CIL}, the authors discussed increasing stability of
the inverse source problem for the three-dimensional Helmholtz equation in a
general domain $\O$ by using the Huygens principle. The observation data are
both $u(x,\kappa)|_{\p\O}, 0<\kappa<K$ and $\na u(x,\kappa)|_{\p\O},
0<\kappa<K$. In \cite{BLT-JDE10}, the authors studied the stability of the two-
and three-dimensional Helmholtz equations via Green's functions. But the
stabilities in \cite{BLT-JDE10} are different from the stability in this paper
where only the Dirichlet data is required. Related results can be found in
\cite{I-CM07, I-D11} on increasing stability of determining potentials and in
the continuation for the Helmholtz equation. We refer to \cite{El, BaLiTr}
for a uniqueness result and numerical study for the inverse source scattering
problem. A survey can be found in \cite{BLLT-IP15} for some general inverse
scattering problems with multi-frequencies. 

\section{Main result}

Let $0<r<R$, define a complex-valued functional space:
\[
 \mathcal{C}_M =\{f\in H^{n+1}(B_R): \|f\|_{H^{n+1}(B_R)}\leq M, ~{\rm
supp}f\subset B_r\subset B_R, ~f: B_R\to\mathbb{C}\},
\]
where  $M>1$ and  $0<r<R$ are constants. For any $v\in H^{1/2}(\p B_R)$, we set
\[
\|v(x,\kappa)\|_{\p B_R}=\int_{\p B_R}\left( |\mathscr{B} v(x,
\kappa)|^2 +\kappa^2 |v(x, \kappa)|^2 \right){\rm d}\gamma.
\]

Now we show the main stability result of the inverse problem.

\begin{theo}\label{mr1}
Let $f_j\in \mathcal{C}_M, j=1, 2$, and let $u_j$ be the solution of the
scattering problem \eqref{sol}--\eqref{rc} corresponding to $f_j$. Then there
exists a positive constant $C$ independent of $n, K, M, \kappa$ such that
\begin{align}
\label{cfe} \| f_1-f_2\|^2_{L^2(B_R)}\leq C
\left(\epsilon^2+\frac{M^2}{\left(\frac{K^{\frac{2}
{3}}|\ln\epsilon|^{\frac{1}{4}}}{(R+1)(6n-6d+3)^3}\right)^{2n-2d+1}} \right),
\end{align}
where $K>1$, $n\ge d$ and
\begin{align}
\label{e1} \epsilon&=\left(\int_0^K \kappa^{d-1} \|(u_1-u_2)(x,\kappa)\|_{\p
B_R}{\rm d}\kappa\right)^{\frac{1}{2}}.
\end{align}
\end{theo}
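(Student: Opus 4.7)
The plan is to pass to the difference $w=u_1-u_2$ and $f=f_1-f_2\in\mathcal{C}_{2M}$, which satisfies $\Delta w+\kappa^2 w=f$ with the Sommerfeld radiation condition, and then to combine a Plancherel decomposition of $\|f\|_{L^2(B_R)}^2$ with an analytic-continuation argument in the wavenumber. For fixed $\xi\in\mathbb{R}^d$ with $|\xi|=\kappa$, the plane wave $\phi_\xi(x)=e^{-{\rm i}\xi\cdot x}$ is an entire solution of the homogeneous Helmholtz equation, so testing the equation for $w$ against $\phi_\xi$ on $B_R$, integrating by parts, and inserting the transparent boundary condition $\p_\nu w=\mathscr{B}w$ produces
\[
 \hat f(\xi)=\int_{\p B_R}\bigl(\phi_\xi\,\mathscr{B}w-w\,\p_\nu\phi_\xi\bigr)\,{\rm d}\gamma.
\]
A Cauchy-Schwarz estimate using $|\phi_\xi|=1$ and $|\p_\nu\phi_\xi|\leq\kappa$ on $\p B_R$ then gives $|\hat f(\kappa\hat\xi)|^2\leq C(R,d)\,\|w(\cdot,\kappa)\|_{\p B_R}$, and integrating in polar coordinates $\xi=\kappa\hat\xi\in(0,K)\times S^{d-1}$ yields the baseline data bound $\int_{|\xi|<K}|\hat f(\xi)|^2{\rm d}\xi\leq C\epsilon^2$.

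Next, for any cutoff $\rho>K$ to be optimized, Plancherel gives
\[
 \|f\|_{L^2(B_R)}^2\leq C\int_{|\xi|<\rho}|\hat f|^2\,{\rm d}\xi+C\int_{|\xi|>\rho}|\hat f|^2\,{\rm d}\xi.
\]
The high-frequency tail is controlled by the a priori Sobolev bound $\|f\|_{H^{n+1}(B_R)}\leq 2M$: after exploiting the compact support in $B_r\subset B_R$ and a careful radial Sobolev estimate paired with the spherical integration measure $\kappa^{d-1}{\rm d}\kappa$, one obtains
\[
 \int_{|\xi|>\rho}|\hat f|^2\,{\rm d}\xi\leq C\,\frac{M^2}{\rho^{2n-2d+1}},
\]
with the $(R+1)$ constant tracking the polynomial weights introduced by the support condition. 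This reproduces the shape of the second summand on the right-hand side of \eqref{cfe}.

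The heart of the argument, and the main obstacle, is to upgrade the data estimate on $\{|\xi|<K\}$ to a bound on $\{|\xi|<\rho\}$ with $\rho\gg K$. To do this I would introduce the auxiliary entire function
\[
 G(z):=\int_{S^{d-1}}\hat f(z\hat\xi)\,\hat{\bar f}(-z\hat\xi)\,{\rm d}\sigma(\hat\xi),
\]
which coincides with $\int_{S^{d-1}}|\hat f(z\hat\xi)|^2{\rm d}\sigma(\hat\xi)$ on the real axis and, by a Paley-Wiener argument applied to both $f$ and $\bar f$ supported in $B_r$, obeys the exponential-type estimate $|G(z)|\leq CM^2 e^{2r|{\rm Im}\,z|}$. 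Since the first step gives $\int_0^K\kappa^{d-1}G(\kappa)\,{\rm d}\kappa\leq C\epsilon^2$, a Hadamard three-circles/harmonic-measure argument applied to $G$ on a half-disc of radius commensurate with $\rho$ interpolates between this $L^2$-smallness on $(0,K)$ and the exponential growth on the remaining boundary, yielding an estimate of the form $\int_0^\rho \kappa^{d-1}G(\kappa)\,{\rm d}\kappa\leq C\,\epsilon^{2\mu}\,M^{2(1-\mu)}$ with an interpolation exponent $\mu=\mu(K,\rho,r)$ determined by the harmonic measure. Balancing this interpolation term against the Sobolev tail $M^2/\rho^{2n-2d+1}$ and choosing $\rho$ of order $K^{2/3}|\ln\epsilon|^{1/4}/[(R+1)(6n-6d+3)^3]$ should then produce \eqref{cfe}. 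The main technical difficulty is tracking the constants through the harmonic-measure computation precisely enough to reproduce the specific factors $K^{2/3}$, $|\ln\epsilon|^{1/4}$, and the cubic $(6n-6d+3)^3$ in the denominator of the final bound.
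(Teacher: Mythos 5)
Your plan follows the same skeleton as the paper's proof: (i) test the equation for $w=u_1-u_2$ against the plane waves $e^{-{\rm i}\xi\cdot x}$ to express $\hat f(\xi)$ through the Cauchy data on $\p B_R$ (this is exactly the paper's Lemma~3.1), (ii) analytically continue an integrated quadratic quantity in the frequency variable from $(0,K)$ to $(0,\rho)$ using the sector/harmonic-measure lemma of Cheng--Isakov--Lu (the paper's Lemma~3.4, quoted from \cite{CIL}), (iii) bound the high-frequency tail by the $H^{n+1}$ a priori bound, and (iv) balance by choosing $\rho\sim K^{2/3}|\ln\epsilon|^{1/4}$ and converting the resulting exponential smallness into the polynomial form via $e^{-x}\le (6n-6d+3)!/x^{3(2n-2d+1)}$. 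The differences are in the technical realization and are, if anything, simplifications: you continue $G(z)=\int_{S^{d-1}}\hat f(z\hat\xi)\,\hat{\bar f}(-z\hat\xi)\,{\rm d}\sigma$, whose sector growth $|G(z)|\le CM^2e^{2r|{\rm Im}\,z|}$ follows from Paley--Wiener, whereas the paper continues the boundary-data functionals $I_1(s)+I_2(s)$ built from the Green's function and must prove the growth bound via explicit Hankel-function estimates (Lemma~3.2); likewise your tail is the one-line Fourier estimate $\int_{|\xi|>\rho}|\hat f|^2\le \rho^{-2(n+1)}\|f\|_{H^{n+1}}^2$ (which is even stronger than the exponent $2n-2d+1$ the theorem requires), whereas the paper's Lemma~3.3 obtains the tail of the boundary-data integral by $n$-fold radial integration by parts on the Green's function. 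One point to fix when you write this out: the sector lemma needs a pointwise bound $\le\epsilon^2$ on $(0,K]$, while the data only give you $\int_0^K\kappa^{d-1}G(\kappa)\,{\rm d}\kappa\le C\epsilon^2$; so you must apply the lemma not to $G$ but to the entire function $J(z)=e^{-(2r+1)z}\int_0^z\zeta^{d-1}G(\zeta)\,{\rm d}\zeta$, using that the inner integral is nonnegative and monotone on the positive real axis (this is precisely how the paper treats $I_1+I_2$, with the damping factor $e^{-(4R+1)s}$). With that adjustment your route closes, and the remaining work is only the bookkeeping you already flagged.
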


\begin{rema}
There are two parts in the stability estimates \eqref{cfe}: the
first part is the data discrepancy and the second part comes from the high
frequency tail of the function. It is clear to see that the stability
increases as $K$ increases, i.e., the problem is more stable as more
frequencies data are used. We can also see that when
$n<\left[\frac{K^{\frac{2}{9}}|\ln\epsilon|^{\frac{1}{12}}}{(R+1)^{\frac{1}{3}}
}+d-\frac{1}{2}\right]$, the stability increases as $n$ increases, i.e., the
problem is more stable as the functions have suitably higher regularity.
\end{rema}

Next we prove Theorem \ref{mr1} in the following section.

\section{Proof of Theorem \ref{mr1}}

First we present several useful lemmas.

\begin{lemm}
 Let $f_j\in L^2(B_R)$ and ${\rm supp}f_j\subset B_R$, $j =1, 2$. Then
 \begin{align*}
 \|f_1-f_2\|^2_{L^2(B_R)}
&\le C\int_0^{\infty}\kappa^{d-1}\int_{\p B_R}\left|\p_{\nu}u(x,\kappa)+\kappa
u(x,\kappa)\right|^2{\rm d}\gamma {\rm d}\kappa.
 \end{align*}
\end{lemm}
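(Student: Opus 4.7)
The plan is to use Plancherel on the Fourier side combined with a Green-identity representation of the Fourier transform of $g := f_1 - f_2$. Since $g$ is compactly supported in $B_R$, I may extend it by zero to $\mathbb{R}^d$ and pass to polar coordinates:
\[
\|g\|^2_{L^2(B_R)} = (2\pi)^{-d}\int_0^\infty \kappa^{d-1}\int_{S^{d-1}}|\hat{g}(\kappa\omega)|^2\, d\sigma(\omega)\, d\kappa.
\]
The task then reduces to controlling $\int_{S^{d-1}}|\hat{g}(\kappa\omega)|^2\, d\sigma$ by a surface integral of $u := u_1 - u_2$ on $\partial B_R$ at each frequency $\kappa$.

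For each $\kappa > 0$ and $\omega \in S^{d-1}$, the plane wave $\phi_\omega(y) = e^{-i\kappa\omega\cdot y}$ satisfies $(\Delta + \kappa^2)\phi_\omega = 0$. Testing the equation $(\Delta + \kappa^2)u = g$ against $\phi_\omega$ and applying Green's second identity over $B_R$ (using $\mathrm{supp}\, g \subset B_R$) gives the key representation
\[
\hat{g}(\kappa\omega) = \int_{\partial B_R} e^{-i\kappa\omega\cdot y}\bigl(\partial_\nu u + i\kappa(\omega\cdot\nu)\, u\bigr)\, d\gamma.
\]
Since $|\phi_\omega| = 1$ and $|\omega\cdot\nu| \le 1$, Cauchy-Schwarz followed by integration in $\omega$ yields
\[
\int_{S^{d-1}}|\hat{g}(\kappa\omega)|^2\, d\sigma \le C_R \int_{\partial B_R}\bigl(|\partial_\nu u|^2 + \kappa^2|u|^2\bigr)\, d\gamma.
\]

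It remains to dominate $|\partial_\nu u|^2 + \kappa^2|u|^2$ by $|\partial_\nu u + \kappa u|^2$ on $\partial B_R$. I would do this mode-by-mode using the transparent boundary condition $\partial_\nu u = \mathscr{B} u$: expanding $u$ in spherical harmonics (or in a Fourier series when $d = 2$) diagonalises both sides, with weights involving the Hankel ratios $h_n^{(1)'}(\kappa R)/h_n^{(1)}(\kappa R)$. The Wronskian identity gives a quantitative positive lower bound for the imaginary part of these ratios, which in turn controls $|1 + h_n^{(1)'}/h_n^{(1)}|^2$ from below by a universal multiple of $|h_n^{(1)'}/h_n^{(1)}|^2 + 1$; this furnishes the desired pointwise comparison on $\partial B_R$ and, combined with the previous display, closes the proof after integration in $\kappa$.

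The main obstacle is precisely this last comparison: the elementary identity $|\partial_\nu u|^2 + \kappa^2|u|^2 = \tfrac{1}{2}(|\partial_\nu u + \kappa u|^2 + |\partial_\nu u - \kappa u|^2)$ is symmetric in sign, so only the outgoing (radiating) structure of $u$ on $\partial B_R$, encoded in the Hankel ratios, can break the symmetry in favor of the $+\kappa u$ combination and rule out near-cancellation on the right-hand side.
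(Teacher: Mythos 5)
Your first three steps reproduce the paper's argument: test the equation against the plane wave $e^{-{\rm i}\kappa\omega\cdot y}$, use Green's second identity over $B_R$ to obtain $\hat g(\kappa\omega)=\int_{\partial B_R}e^{-{\rm i}\kappa\omega\cdot y}\bigl(\partial_\nu u+{\rm i}\kappa(\omega\cdot\nu)u\bigr)\,{\rm d}\gamma$, then apply Cauchy--Schwarz, integrate over the sphere, and invoke Plancherel in polar coordinates. This correctly gives $\|f_1-f_2\|^2_{L^2(B_R)}\le C\int_0^\infty\kappa^{d-1}\int_{\partial B_R}\bigl(|\partial_\nu u|^2+\kappa^2|u|^2\bigr)\,{\rm d}\gamma\,{\rm d}\kappa$, and this is the only form of the lemma the paper actually uses downstream: the data norm $\|\cdot\|_{\partial B_R}$ in Theorem 2.1 is precisely $\int(|\mathscr{B}v|^2+\kappa^2|v|^2)$, and the paper's own proof of this step really only justifies the triangle-inequality bound $|\partial_\nu u|+\kappa|u|$, not the modulus of the complex sum. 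Up to this point your write-up is correct and, if anything, more careful than the paper's.

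The genuine gap is your fourth step. The uniform comparison you need, $|z_n|^2+1\le C\,|z_n+1|^2$ with $z_n=h_n^{(1)'}(\kappa R)/h_n^{(1)}(\kappa R)$ (or $H_n^{(1)'}/H_n^{(1)}$ in 2D) and $C$ independent of $n$ and $\kappa$, is false. The Wronskian does give ${\rm Im}\,z_n=\bigl((\kappa R)^2|h_n^{(1)}(\kappa R)|^2\bigr)^{-1}>0$, but for fixed $t=\kappa R$ this decays factorially in $n$ since $|h_n^{(1)}(t)|$ blows up like $(2n-1)!!/t^{n+1}$, so the outgoing structure supplies no uniform lower bound. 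Meanwhile ${\rm Re}\,z_n$ moves from order-one values at small $n$ down to $\approx -n/t$ for $n\gg t$, so in the transition regime $n\approx t$ it comes arbitrarily close to $-1$ (as $t$ ranges over $(0,\infty)$), while there ${\rm Im}\,z_n\sim n^{-1/3}$. Hence $|z_n+1|^2$ can be of size $n^{-2/3}$ while $|z_n|^2+1\approx 2$: the near-glancing modes defeat any universal constant, and the pointwise domination of $|\partial_\nu u|^2+\kappa^2|u|^2$ by $C|\partial_\nu u+\kappa u|^2$ on $\partial B_R$ genuinely fails for radiating fields. The fix is not to repair this step but to omit it: stop after your Cauchy--Schwarz estimate and read the right-hand side of the lemma as $(|\partial_\nu u|+\kappa|u|)^2$ (equivalently $|\partial_\nu u|^2+\kappa^2|u|^2$), which is how the lemma is invoked in the proof of Theorem 2.1.
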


\begin{proof}
 Let $\xi\in\mathbb{R}$ with $|\xi|=\kappa$. Multiplying $e^{-{\rm i}\xi
x}$ on both sides of \eqref{sol} and integrating over $B_R$, we obtain
\[
 \int_{B_R}e^{-{\rm i}\xi x}f(x){\rm d}x=\int_{\p B_R}e^{-{\rm i}\xi
x}(\p_{\nu}u(x,\kappa)+{\rm i}\xi\nu u(x,\kappa)){\rm d}\gamma, \quad
|\xi|=\kappa\in (0,\infty).
\]
Since $\mbox{supp}f\subset B_R$, we have
\[
 \int_{\mathbb{R}^d}e^{-{\rm i}\xi x}f(x){\rm d}x=\int_{\p B_R}e^{-{\rm i}\xi
x}(\p_{\nu}u(x,\kappa)+{\rm i}\xi\nu u(x,\kappa)){\rm d}\gamma, \quad
|\xi|=\kappa\in (0,\infty),
\]
which gives
\[
 \left|\int_{\mathbb{R}^d}e^{-{\rm i}\xi x}f(x){\rm
d}x\right|^2\le\left|\int_{\p B_R}(\p_{\nu}u(x,\kappa)+\kappa
u(x,\kappa)){\rm d}\gamma\right|^2, \quad |\xi|=\kappa\in (0,\infty).
\]
Hence,
\begin{align*}
 \left(\int_{\mathbb{R}^d}\left|\int_{\mathbb{R}^d}e^{-{\rm i}\xi
x}f(x){\rm d}x\right|^2 {\rm d}\xi\right)^{\frac{1}{2}}&\cr
 \le&\left(\int_{\mathbb{R}^d}\left|\int_{\p B_R}(\p_{\nu}u(x,\kappa)+\kappa
u(x,\kappa)){\rm d}\gamma\right|^2 {\rm d}\xi\right)^{\frac{1}{2}}, \quad
|\xi|=\kappa\in (0,\infty).
\end{align*}
When $d=2$, we obtain by using the polar coordinates that
\begin{align*}
 &\left(\int_{\mathbb{R}^2}\left|\int_{\mathbb{R}^2}e^{-{\rm i}\xi
x}f(x){\rm d}x\right|^2{\rm d}\xi\right)^{\frac{1}{2}}\cr
 &\le\left(\int_0^{2\pi}{\rm d}\theta\int_0^{\infty}\kappa\left|\int_{\p
B_R}(\p_{\nu}u(x,\kappa)+\kappa
u(x,\kappa)){\rm d}\gamma\right|^2{\rm d}\kappa\right)^{\frac{1}{2 }}\cr
 &\le \left(2\pi\int_0^{\infty}\kappa\left|\int_{\p
B_R}(\p_{\nu}u(x,\kappa)+\kappa
u(x,\kappa)){\rm d}\gamma\right|^2{\rm d}\kappa\right)^{\frac{1}{2}}\cr
 &\le\left(2\pi^2R^2\int_0^{\infty}\kappa\int_{\p
B_R}\left|\p_{\nu}u(x,\kappa)+\kappa u(x,\kappa)\right|^2{\rm d}\gamma
{\rm d}\kappa\right)^{\frac{1}{2}},
\end{align*}
It follows from the Plancherel theorem that
\begin{align*}
 \|f_1-f_2\|^2_{L^2(B_R)}&=\|f_1-f_2\|^2_{L^2(\mathbb{R}^2)}\\
& =\frac{1}{(2\pi)^2}\int_{\mathbb{R}^2}|\hat{f}
_1(\xi)-\hat{f}_2(\xi)|^2{\rm d}\xi\\
&\le C\int_0^{\infty}\kappa\int_{\p B_R}|\p_{\nu}u(x,\kappa)+\kappa
u(x,\kappa)|^2{\rm d}\gamma {\rm d}\xi.
\end{align*}

When $d=3$,  we obtain by using the polar coordinates that
\begin{align*}
 &\left(\int_{\mathbb{R}^3}\left|\int_{\mathbb{R}^3}e^{-{\rm i}\xi
x}f(x){\rm d}x\right|^2{\rm d}\xi\right)^{\frac{1}{2}}\cr
 &\le\left|\int_0^{2\pi}{\rm d}\theta\int_0^{\pi}{\rm sin}\varphi
{\rm d}\varphi\int_0^{\infty}\kappa^2\left|\int_{\p
B_R}(\p_{\nu}u(x,\kappa)+\kappa
u(x,\kappa)){\rm d}\gamma\right|^2{\rm d}\kappa\right|^{\frac{1}{2}}\cr
 &\le \left(2\pi^2\int_0^{\infty}\kappa^2\left|\int_{\p
B_R}(\p_{\nu}u(x,\kappa)+\kappa
u(x,\kappa)){\rm d}\gamma\right|^2{\rm d}\kappa\right)^{\frac{1}{2}}\cr
 &\le\left(\frac{8}{3}\pi^3R^3\int_0^{\infty}\kappa^2\int_{\p
B_R}\left|\p_{\nu}u(x,\kappa)+\kappa u(x,\kappa)\right|^2{\rm d}\gamma
{\rm d}\kappa\right)^{\frac{1}{2}}.
\end{align*}
It follows from the Plancherel theorem that
\begin{align*}
 \|f_1-f_2\|^2_{L^2(B_R)}&=\|f_1-f_2\|^2_{L^2(\mathbb{R}^3)}\\
& =\frac{1}{(2\pi)^3}\int_{\mathbb{R}^3}|\hat{f}
_1(\xi)-\hat{f}_2(\xi)|^2{\rm d}\xi\\
&\le C\int_0^{\infty}\kappa^2\int_{\p B_R}\left|\p_{\nu}u(x,\kappa)+\kappa
u(x,\kappa)\right|^2{\rm d}\gamma {\rm d}\xi,
\end{align*}
which completes the proof.
\end{proof}

For $d=2$, let
\begin{align}\label{3.1}
I_1(s)=&\int_0^s \kappa^{3}\int_{\p B_R}\left(\int_{B_R}-\frac{{\rm
i}}{4}H_0^{(1)}(\kappa|x-y|)(f_1(y)-f_2(y)){\rm d}y\right)\cr
&\left(\int_{B_R}\frac{{\rm
i}}{4}\bar{H}_0^{(1)}(\kappa|x-y|)(\bar{f}_1(y)-\bar{f}_2(y)){\rm
d}y\right){\rm d}\gamma(x){\rm d}\kappa, \\
\label{3.2}I_2(s)=&\int_0^s \kappa\int_{\p
B_R}\left(-\int_{B_R}\frac{{\rm
i}}{4}\p_{\nu}H_0^{(1)}(\kappa|x-y|)(f_1(y)-f_2(y)){\rm d}y\right)\cr
&\left(\int_{B_R}\frac{{\rm
i}}{4}\p_{\nu}\bar{H}_0^{(1)}(\kappa|x-y|)(\bar{f}
_1(y)-\bar{f}_2(y)){\rm d}y\right){\rm d}\gamma(x){\rm d}\kappa.
\end{align}
For $d=3$, let
\begin{align}\label{3.3}
I_1(s)=&\int_0^s \kappa^{4}\int_{\p B_R}\left(\int_{B_R}\frac{e^{{\rm
i}\kappa|x-y|}}{4\pi |x-y|}(f_1(y)-f_2(y)){\rm d}y\right)\cr
&\left(\int_{B_R}\frac{e^{-{\rm i}\kappa|x-y|}}{4\pi
|x-y|}(\bar{f}_1(y)-\bar{f}_2(y)){\rm d}y\right){\rm d}\gamma(x){\rm d}\kappa,\\
\label{3.4}I_2(s)=&\int_0^s
\kappa^{3}\int_{\p B_R}\left(\int_{B_R}\p_{\nu}\frac{e^{{\rm
i}\kappa|x-y|}}{4\pi |x-y|}(f_1(y)-f_2(y)){\rm d}y\right)\cr
&\left(\int_{B_R}\p_{\nu}\frac{e^{-{\rm i}\kappa|x-y|}}{4\pi
|x-y|}(\bar{f}_1(y)-\bar{f}_2(y)){\rm d}y\right){\rm d}\gamma(x){\rm d}\kappa.
\end{align}
Denote
$$ S=\{z=x+{\rm i}y\in\mathbb{C}: -\frac{\pi}{4}<{\rm arg}
z<\frac{\pi}{4}\}.
$$
The integrands in \eqref{3.1}--\eqref{3.4} are analytic functions of
$\kappa$ in $S$. The integrals with respect to $\kappa$ can be taken over any path joining
points $0$ and $s$ in $S$. Thus $I_1(s)$ and $I_2(s)$ are
 analytic functions of $s=s_1+{\rm i}s_2\in S, s_1, s_2\in\mathbb{R}$.

\begin{lemm}
 Let $f_j\in L^2(B_R), {\rm supp}f_j\subset B_R, j =1, 2$. We have  for any
$s=s_1+{\rm i}s_2\in S$ that
\begin{enumerate}

\item for $d=2$,
 \begin{align}
 \label{3.5} |I_1(s)|&\leq 16\pi^3R^3|s|^{5}e^{4R|s_2|}\|f_1(x)-f_2(x)\|_{L^2(B_R)}^2,\\
 \label{3.6} |I_2(s)|&\leq 16\pi^3R^3|s|^{3}e^{4R|s_2|}\|f_1(x)-f_2(x)\|_{H^1(B_R)}^2,
 \end{align}

\item  for $d=3$,
\begin{align}
\label{3.7}  |I_1(s)|&\leq 16\pi^3(|s|^{3}R^3+|s|^{4}R^4)e^{4R|s_2|}\|f_1(x)-f_2(x)\|_{L^2(B_R)}^2,\\
 \label{3.8} |I_2(s)|&\leq 16\pi^3(|s|^{2}R^3+|s|^{3}R^4)e^{4R|s_2|}\|f_1(x)-f_2(x)\|_{H^1(B_R)}^2,
 \end{align}

\end{enumerate}
\end{lemm}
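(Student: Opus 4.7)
The plan is to bound each inner $y$-integral pointwise by Cauchy--Schwarz, then integrate over $\partial B_R$ and along the straight segment $\kappa = ts$, $t\in[0,1]$, from $0$ to $s$ (allowed since $I_\ell$ is analytic on $S$, as observed just before the lemma). Along this segment $|\kappa|\le|s|$ and $|\kappa_2|\le|s_2|$ throughout, so for $x\in\partial B_R$ and $y\in B_R$ (hence $|x-y|\le 2R$) we have $|e^{i\kappa|x-y|}| = e^{-\kappa_2|x-y|} \le e^{2R|s_2|}$; the two factors of this type appearing in the conjugate product in each $I_\ell$ jointly give the announced exponential $e^{4R|s_2|}$.

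For $I_1$, Cauchy--Schwarz in $y$ bounds each inner integral by $\|G(\kappa,x-\cdot)\|_{L^2(B_R)} \cdot \|f_1-f_2\|_{L^2(B_R)}$. The squared norm $\|G(\kappa,x-\cdot)\|_{L^2(B_R)}^2$ is uniformly finite for $x\in\partial B_R$: in 3D because $1/|x-y|^2$ is integrable over $B_{2R}(x)$ with integral of order $R$, and in 2D because the Hankel function has only a logarithmic singularity at $0$ and $|z|^{-1/2}$ decay at infinity. Picking up the surface measure of $\partial B_R$ (of order $R^{d-1}$) and performing the path integration against the polynomial weight $\kappa^{d+1}$ yields \eqref{3.5} and \eqref{3.7}.

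For $I_2$ the kernel $\partial_{\nu_x}G$ is too singular for a direct $L^2$ estimate (e.g., $|\partial_{\nu_x}G|\sim|x-y|^{-2}$ in 3D fails to be $L^2$). The key identity is $\partial_{\nu_x}G(\kappa,x-y) = -\nu(x)\cdot\nabla_y G(\kappa,x-y)$, which follows from $G$ depending only on $x-y$; integrating by parts in $y$ and using that $(f_1-f_2)$ has compact support inside $B_R$ to kill the boundary term yields
\[
\int_{B_R}\partial_{\nu_x}G\,(f_1-f_2)\,dy \;=\; \int_{B_R} G\,\nu(x)\cdot\nabla_y(f_1-f_2)\,dy,
\]
reducing to the same estimate as in $I_1$ but with $\|f_1-f_2\|_{H^1(B_R)}$ in place of the $L^2$-norm. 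To recover the mixed polynomial form $|s|^p R^q + |s|^{p+1}R^{q+1}$ in \eqref{3.6} and \eqref{3.8}, I would split $\partial_{\nu_x}G = P_1 + P_2$ into the piece $P_1$ in which the derivative lands on $e^{i\kappa|x-y|}$ (generating an extra $\kappa$ but keeping a square-integrable kernel) and the piece $P_2$ in which it lands on $1/|x-y|^{d-2}$ (treated by the integration by parts above), and estimate each separately with its own power of $R$ and $\kappa$.

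The main obstacle is the careful bookkeeping of the polynomial factors, so as to obtain the mixed form $|s|^pR^q+|s|^{p+1}R^{q+1}$ rather than a single dominant term: combining $P_1$ and $P_2$ under one Cauchy--Schwarz bound would be wasteful. Once the splitting is performed, the remaining computations reduce to routine Cauchy--Schwarz estimates with explicit $L^2$ bounds on the kernels, and the analytic continuation into $S$ plus the support condition on $f_j$ take care of the rest.
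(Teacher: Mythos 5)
Your proposal follows essentially the same route as the paper: parametrize the path as $\kappa=st$, $t\in(0,1)$, bound the oscillatory factor by $e^{2R|s_2|}$ (twice, giving $e^{4R|s_2|}$), apply Cauchy--Schwarz in $y$, and for $I_2$ use $\nabla_x G=-\nabla_y G$ together with the compact support of $f_1-f_2$ to integrate by parts and trade the kernel derivative for $\|f_1-f_2\|_{H^1(B_R)}$. Two caveats. First, in the two-dimensional case the kernel is $H_0^{(1)}(st|x-y|)$ with \emph{complex} argument, so citing the real-axis behavior (logarithmic singularity at $0$, $|z|^{-1/2}$ decay at infinity) is not enough to justify the uniform factor $e^{4R|s_2|}$; the paper fills this in by deriving the explicit estimate $|H_0^{(1)}(z)|\le\frac{1}{\pi}e^{|{\rm Im}\,z|}\bigl(2+\frac{1}{{\rm Re}\,z}\bigr)$ for ${\rm Re}\,z>0$ from Watson's contour-integral representation, and you would need some such quantitative bound on the sector. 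Second, your proposed splitting $\partial_{\nu_x}G=P_1+P_2$ is unnecessary and slightly misdiagnoses the source of the mixed form $|s|^pR^q+|s|^{p+1}R^{q+1}$: that form already appears in the derivative-free bound \eqref{3.7}, so it cannot come from distributing the normal derivative; moreover, ``integrating by parts only the piece where the derivative lands on $1/|x-y|$'' is not a well-posed operation, whereas integrating by parts the whole gradient at once (as in your displayed identity, and as the paper does) reduces $I_2$ exactly to the $I_1$ estimate with $\nabla(f_1-f_2)$ in place of $f_1-f_2$. Neither issue derails the argument, but the 2D Hankel bound is the one step you must actually supply.
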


\begin{proof} We first prove (\ref{3.7}). Let $\kappa=st, t\in(0, 1)$. A simple
calculation yields
 \begin{align*}
  I_1(s)=&\int_0^1 s^{5}t^{4}\int_{\p B_R}\left(\int_{B_R}\frac{e^{{\rm
i}st|x-y|}}{4\pi |x-y|}(f_1(y)-f_2(y)){\rm d}y\right)\cr
  &\left(\int_{B_R}\frac{e^{-{\rm i}st|x-y|}}{4\pi
|x-y|}(\bar{f}_1(y)-\bar{f}_2(y)){\rm d}y\right){\rm d}\gamma(x){\rm d}t.
\end{align*}
Noting that $|e^{{\rm i}st |x-y|}|\leq e^{2R|s_2|}$ for all $x\in \p B_R, y\in
B_R$, we have
 \begin{align*}
|I_1(s)|&=\int_0^1|s|^{5}t^{4}\int_{\p
B_R}\left|\int_{B_R}\frac{e^{2|s_2|R}}{|x-y|
}|f_1(y)-f_2(y)|{\rm d}y\right|^2{\rm d}\gamma(x){\rm d}t\cr
&\le\int_0^1|s|^{5}t^{4}\int_{\p B_R}\left|\int_{B_R}
|f_1(y)-f_2(y)|^2 {\rm d}y\right|\int_{B_R}\frac{e^{4R|s_2|}}{|x-y|^2}{\rm d}y
{\rm d}\gamma(x){\rm d}t,\cr
\end{align*}
where we have used the Schwarz inequality for the integral with respect to $y$
in the last inequality. Using the polar coordinates $\rho=|x-y|$ with respect to
$y$ yields
\begin{align*}
|I_1(s)|\le\int_0^1|s|^{5}\left(\int_{B_R}|f_1(y)-f_2(y)|^2{\rm d}y\right)\int_{
\p B_R} \left(2\pi^2\int_0^{2R}e^{4|s_2|R}{\rm d}\rho
\right){\rm d}\gamma(x){\rm d}t,\cr
\end{align*}
which implies (\ref{3.7}).

Next we prove (\ref{3.8}). Let $\kappa=st, t\in(0, 1)$. A simple calculation
yields
 \begin{align*}
  I_2(s)=&\int_0^1 s^{3}t^{2}\int_{\p B_R}\left(\int_{B_R}\p_{\nu}\frac{e^{{\rm
i}st|x-y|}}{4\pi |x-y|}(f_1(y)-f_2(y)){\rm d}y\right)\cr
&\left(\int_{B_R}\p_{\nu}\frac{e^{-{\rm i}st|x-y|}}{4\pi
|x-y|}(\bar{f}_1(y)-\bar{f}_2(y)){\rm d}y\right){\rm d}\gamma(x){\rm d}t,
\end{align*}
which gives
 \begin{align*}
|I_2(s)|=\int_0^1|s|^{3}t^{2}\int_{\p
B_R}\left|\int_{B_R}\na_x\left(\frac{e^{{\rm
i}st|x-y|}}{|x-y|}\right)\cdot\nu(f_1(y)-f_2(y)){\rm
d}y\right|^2{\rm d}\gamma(x){\rm d}t.
\end{align*}
Noting $\na_x\left(\frac{e^{{\rm
i}st|x-y|}}{|x-y|}\right)=-\na_y\left(\frac{e^{{\rm i}st|x-y|}}{|x-y|}\right)$
and ${\rm supp}f_j\subset B_R, j=1,2$, we have
\begin{align*}
  |I_2(s)|=\int_0^1|s|^{3}t^{2}\int_{\p B_R}\left|\int_{B_R}\frac{e^{{\rm
i}st|x-y|}}{|x-y|}\na_y\left(|f_1(y)-f_2(y)\right)\cdot\nu
{\rm d}y\right|^2{\rm d}\gamma(x){\rm d}t.
\end{align*}
Following a similar argument for proving (\ref{3.7}), we can prove (\ref{3.8}).

Now we show the proofs of (\ref{3.5}) and (\ref{3.6}). First we
prove (\ref{3.5}). By (\ref{3.1}) we have
\begin{align*}
  I_1(s)=\int_0^1 s^{4}t^{3}\int_{\p B_R}\left|\int_{B_R}\frac{{\rm
i}}{4}H_0^{(1)}(st|x-y|)(f_1(y)-f_2(y)){\rm d}y\right|^2{\rm d}\gamma(x){\rm
d}t.
\end{align*}
The Hankel function can also be expressed by the following integral when ${\rm
Re}z>0$ (see e.g.,\cite{Wa}, Chapter VI):
\begin{align*}
H_0^{(1)}(z)=\frac{1}{{\rm i}\pi}\int_{1+\infty{\rm i}}^{1}e^{{\rm
i}z\tau}(\tau^2-1)^{-1/2}{\rm d}\tau.
\end{align*}
Consequently,
\begin{align*}
|H_0^{(1)}(z)|&= \left| \frac{1}{\pi}\int^0_{+\infty}e^{{\rm
i}({\rm Re }z+{\rm i}{\rm Im }z)(1+t{\rm i})}((1+t{\rm i})^2-1)^{-1/2}{\rm
d}t\right|\cr
&\le \left|\frac{1}{\pi}e^{{\rm i}{\rm Re }z -{\rm Im
}z}\int^0_{+\infty}e^{-t{\rm Re z}-{\rm i}t{\rm Im }z}(2\tau{\rm
i}-\tau^2)^{-1/2}{\rm d}t\right|\cr
&\le \frac{1}{\pi}e^{|{\rm Im }z|}\int_0^{+\infty}\frac{e^{-t{\rm Re
}z}}{\left|\tau^{1/2}(2{\rm i}-\tau)^{1/2}\right|}{\rm d}t\cr
&\le \frac{1}{\pi}e^{|{\rm Im }z|}\int_0^{+\infty}\frac{e^{-t{\rm Re
}z}}{\tau^{1/2}(\tau^2+4)^{1/4}}{\rm d}t\cr
&\le \frac{1}{\pi}e^{|{\rm Im }z|}\int_0^{+\infty}\frac{e^{-t{\rm Re
}z}}{\tau^{1/2}2^{1/2}}{\rm d}t\cr
&=\frac{1}{\pi}e^{|{\rm Im }z|}\left(\int_0^{1}\frac{e^{-t{\rm Re
}z}}{\tau^{1/2}2^{1/2}}{\rm d}t+\int_1^{+\infty}\frac{e^{-t{\rm Re
}z}}{\tau^{1/2}2^{1/2}}{\rm d}t\right)\cr
&\le\frac{1}{\pi}e^{|{\rm Im }z|}\left(\int_0^{1}\frac{1}{\tau^{1/2}}{\rm
d}t+\int_1^{+\infty}e^{-t{\rm Re }z}{\rm d}t\right)\cr
&\le\frac{1}{\pi}e^{|{\rm Im }z|}\left(2+\frac{1}{{\rm Re }z}\right).
\end{align*}
Similarly, we can obtain
\begin{align*}
|\ol{H}_0^{(1)}(z)|\le \frac{1}{\pi}e^{|{\rm Im }z|}\left(2+\frac{1}{{\rm Re }z}\right).
\end{align*}
Hence we have
\begin{align*}
  |I_1(s)|\le\int_0^1|s|^{4}t^{3}\int_{\p
B_R}\left|\int_{B_R}|f_1(y)-f_2(y)|^2{\rm d}y\right|\int_{B_R}
e^{4R|s_2|}\left(2+\frac{1}{|x-y|s_1t}\right){\rm d}y{\rm d}\gamma(x){\rm d}t.
\end{align*}
Using the polar coordinates $\rho=|x-y|$ with respect to $y$ yields
\begin{align*}
|I_1(s)|\le\int_0^1|s|^{4}t^{3}\left|\int_{B_R}|f_1(y)-f_2(y)|^2{
\rm d}y\right|\int_ { \p B_R}
\left(2\pi^2\int_{0}^{2R}e^{4R|s_2|}\left(2\rho+\frac{1}{s_1t}\right){\rm
d}\rho\right){\rm d}\gamma(x){\rm d}t.
\end{align*}
which completes the proof of (\ref{3.5}).

Noting that $\p_{\nu}H_0^{(1)}(\kappa|x-y|)=\na_x
H_0^{(1)}(\kappa|x-y|)\cdot\nu$ and $\na_x H_0^{(1)}(\kappa|x-y|)=-\na_y
H_0^{(1)}(\kappa|x-y|)$, we can prove (\ref{3.6}) in a similar way.
\end{proof}

\begin{lemm}
Let $f_j\in H^n(B_R), n\ge d, {\rm supp}f_j\subset B_r\subset B_R, j =1, 2$.
Then there exists a constant $C$ independent of $n$ such that for any $s\ge 1$
\begin{align}\label{3.9}
 \int_s^{+\infty} \int_{\p B_R}\kappa^{d-1} \bigl(|\p_{\nu}u(x,\kappa)|^2
+\kappa^2|u(x, \kappa)|^2 \bigr){\rm
d}\gamma{\rm d}\kappa&\leq C s^{-(2n-2d+1)}\| f_1-f_2\|^2_{H^{n+1}(B_R)}.
\end{align}
\end{lemm}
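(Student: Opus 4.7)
My plan is to use the outgoing Green's function representation of $w := u_1-u_2$ and repeatedly apply Green's second identity, exploiting two facts: (i) the fundamental solution $G(x,y,\kappa)$ (the Hankel kernel $\frac{i}{4}H_0^{(1)}(\kappa|x-y|)$ in 2D and $e^{{\rm i}\kappa|x-y|}/(4\pi|x-y|)$ in 3D) satisfies $(\Delta_y+\kappa^2)G(x,\cdot,\kappa)=0$ for $y\neq x$, and (ii) $g:=f_1-f_2$ has compact support in $B_r\subset\subset B_R$, so its zero extension lies in $H^{n+1}(\mathbb{R}^d)$ with the same norm, and $g$ together with all of its distributional derivatives vanishes in a neighborhood of $\p B_R$. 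For $x\in\p B_R$, the kernel $G(x,\cdot,\kappa)$ is smooth on $B_R$ (since $|x-y|\ge R-r$), and the representation $w(x,\kappa)=\int_{B_R}G(x,y,\kappa)\,g(y)\,dy$ holds, with Green's second identity producing no boundary contribution on $\p B_R$.

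The central identity is then
\[
\int_{B_R} G\,(\Delta_y+\kappa^2)g\,dy=\int_{B_R}(\Delta_y+\kappa^2)G\cdot g\,dy=0,
\]
so $\int_{B_R} G\,g\,dy=-\kappa^{-2}\int_{B_R} G\,\Delta g\,dy$. Iterating $k$ times yields $w(x,\kappa)=(-\kappa^{-2})^k\int_{B_R} G\,\Delta^k g\,dy$, and when an odd power $\kappa^{-(n+1)}$ is required (needed in 2D for odd $n+1$) I insert one non-stationary-phase integration by parts via $e^{{\rm i}\kappa|x-y|}=\frac{i}{\kappa}\hat\eta\cdot\na_y e^{{\rm i}\kappa|x-y|}$ with $\hat\eta=(x-y)/|x-y|$. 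Cauchy--Schwarz combined with the uniform bound $\|G(x,\cdot,\kappa)\|_{L^2(B_R)}\le C(R,r,d)$ for $\kappa\ge 1$ (using $|G|\le (4\pi(R-r))^{-1}$ in 3D and the asymptotic $|H_0^{(1)}(z)|\lesssim z^{-1/2}$ in 2D, as already exploited in Lemma 3.2) then gives $|w(x,\kappa)|\le C\kappa^{-(n+1)}\|g\|_{H^{n+1}(B_R)}$. Differentiating the kernel in $x$ costs at most one extra factor of $\kappa$ from the oscillatory phase, so $|\p_\nu w(x,\kappa)|\le C\kappa^{-n}\|g\|_{H^{n+1}(B_R)}$, and therefore $\int_{\p B_R}(|\p_\nu w|^2+\kappa^2|w|^2)\,d\gamma\le C\kappa^{-2n}\|g\|_{H^{n+1}}^2$.

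Integrating over $\kappa\in(s,\infty)$ against the weight $\kappa^{d-1}$ produces $\int_s^\infty\kappa^{d-1-2n}\,d\kappa=\frac{s^{d-2n}}{2n-d}$, and since $s\ge 1$ and $d\ge 1$ imply $s^{d-2n}\le s^{-(2n-2d+1)}$, the bound \eqref{3.9} follows. The main obstacle is verifying that the constant $C$ is independent of $n$: the Green's-identity iteration is designed precisely to avoid the $n!$-type factors that a naive non-stationary-phase IBP would accumulate when many derivatives are forced onto the singular kernel $1/|x-y|$, because in the present scheme only $\|G(x,\cdot,\kappa)\|_{L^2(B_R)}$ enters the final estimate and this norm is controlled by $R,r,d$ alone. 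The 2D case requires slightly more care with the small-argument behavior of $H_0^{(1)}$, but since $|x-y|\ge R-r$ is bounded away from zero, the Hankel asymptotic only improves the bound.
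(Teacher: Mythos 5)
Your argument is correct and reaches the stated bound, but it takes a genuinely different route from the paper. The paper works in polar coordinates $\rho=|x-y|$ centered at the observation point $x\in\p B_R$ and integrates by parts $n$ times in $\rho$ against the oscillatory phase: in 3D the Jacobian $\rho^2$ cancels the $1/\rho$ singularity so the amplitude is just $(f_1-f_2)\rho$, while in 2D the authors must introduce the iterated antiderivatives $H_n$ of $H_0^{(1)}$ via Watson's integral representation and prove $|H_n|\le C$ uniformly in $n$. Your Green's-identity iteration $\int G\,g=-\kappa^{-2}\int G\,\Delta g$ (valid on a ball $B_{r'}$ with $r<r'<R$, where $G(x,\cdot,\kappa)$ is smooth and $g$ vanishes near the boundary, so no boundary terms arise) replaces all of that radial machinery: it is dimension-independent, produces no combinatorial factors from the iteration itself, and makes the $n$-independence of the constant transparent, since only $\|G(x,\cdot,\kappa)\|_{L^2(B_r)}$ and $\|\Delta^k g\|_{L^2}\le\|g\|_{H^{2k}}$ enter. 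The exponent bookkeeping also checks out: your $\kappa^{-(n+1)}$ decay for $w$ and $\kappa^{-n}$ for $\p_\nu w$ give $\int_s^\infty\kappa^{d-1-2n}{\rm d}\kappa\le Cs^{d-2n}\le Cs^{-(2n-2d+1)}$ for $s\ge 1$, which is in fact slightly stronger than what the paper extracts.

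The one place you should firm up is the auxiliary integration by parts used to reach an odd power of $\kappa^{-1}$ in 2D. The issue there is not the small-argument behavior of $H_0^{(1)}$ (the argument $\kappa|x-y|\ge s(R-r)\ge R-r$ is bounded away from zero), but that $H_0^{(1)}(\kappa\rho)$ is not literally of the form $a(y)e^{{\rm i}\kappa|x-y|}$: you need the phase--amplitude splitting $H_0^{(1)}(z)=e^{{\rm i}z}h(z)$ with $|h(z)|\le Cz^{-1/2}$ and $|h'(z)|\le Cz^{-3/2}$ on $[R-r,\infty)$, so that the single radial integration by parts genuinely gains one full power of $\kappa^{-1}$. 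With that standard fact supplied, the proof is complete. (Both your write-up and the paper's are equally informal about multinomial constants in passing between directional derivatives, or powers of $\Delta$, and the $H^m$ norm; using the Fourier characterization of the norm resolves this uniformly in $n$ in your scheme.)
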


\begin{proof}
It is easy to see that
\begin{align*}
&\int_s^{+\infty}\int_{\p B_R} \kappa^{d-1}\bigl(|\p_{\nu}u(x,\kappa)|^2
+\kappa^2|u(x, \kappa)|^2 \bigr){\rm d}\gamma{\rm d}\kappa\cr
&= \int_s^{+\infty} \int_{\p B_R} \kappa^{d+1}|u(x, \kappa)|^2 {\rm
d}\gamma{\rm d}\kappa  +  \int_s^{+\infty} \int_{\p B_R} \kappa^{d-1}
|\p_{\nu}u(x,\kappa)|^2{\rm d}\gamma{\rm d}\kappa\cr
&\triangleq L_1+L_2.
\end{align*}
Next, we will estimate $L_1$ and $L_2$. When $d=3$, we have
\begin{align*}
L_1&=\int_s^{+\infty} \int_{\p B_R} \kappa^{4}|u(x, \kappa)|^2 {\rm d}\gamma{\rm
d}\kappa\cr
&=\int_s^{+\infty} \int_{\p B_R}
\kappa^{4}\left|\int_{\mathbb{R}^3}\frac{e^{{\rm
i}\kappa|x-y|}}{4\pi|x-y|}(f_1-f_2)(y){\rm d}y\right|^2 {\rm d}\gamma{\rm
d}\kappa.
\end{align*}
Using the polar coordinates $\rho=|y-x|$ originated at $x$ with respect to $y$,
we have
\begin{align*}
L_1=\int_s^{+\infty} \int_{\p B_R} \kappa^{4}\left|\int_0^{2\pi}{\rm
d}\theta\int_0^{\pi}\sin\varphi{\rm d}\varphi\int_0^{+\infty}\frac{e^{{\rm
i}\kappa\rho}}{4\pi}(f_1-f_2)\rho {\rm d}\rho\right|^2 {\rm d}\gamma{\rm
d}\kappa.
\end{align*}
Using integration by parts and noting ${\rm supp}f_j\subset B_r\subset B_R$, we obtain
\begin{align*}
L_1=\int_s^{+\infty} \int_{\p B_R} \kappa^{4}\left|\int_0^{2\pi}{\rm
d}\theta\int_0^{\pi}\sin\varphi{\rm d}\varphi\int_{R-r}^{2R}\frac{e^{{\rm
i}\kappa\rho}}{4\pi({\rm i}\kappa)^n}\frac{\p^n[(f_1-f_2)\rho]}{\p\rho^n} {\rm
d}\rho\right|^2 {\rm d}\gamma{\rm d}\kappa.
\end{align*}
Consequently,
\begin{align*}
L_1\le &\int_s^{+\infty} \int_{\p B_R} \kappa^{4}\left|\int_0^{2\pi}{\rm
d}\theta\int_0^{\pi}\sin\varphi{\rm d}\varphi\int_{R-r}^{2R}\frac{1}{4\pi
\kappa^n}\right.\cr
&\left(\left|\sum\limits_{|\alpha|=n}\p_y^{\alpha}
(f_1-f_2)\right|\rho\right.\left.\left.+n\left|\sum\limits_{|\alpha|=n-1}\p_y^{
\alpha}(f_1-f_2)\right|\right){\rm d}\rho\right|^2 {\rm d}\gamma{\rm d}\kappa\cr
=&\int_s^{+\infty} \int_{\p B_R} \kappa^{4}\left|\int_0^{2\pi}{\rm
d}\theta\int_0^{\pi}\sin\varphi{\rm d}\varphi\int_{R-r}^{2R}\frac{1}{4\pi
\kappa^n}\right.\cr
&\left(\left|\sum\limits_{|\alpha|=n}\p_y^{\alpha}(f_1-f_2)\right|\frac{1}{\rho}
\right.\left.\left.+\left|\sum\limits_{|\alpha|=n-1}\p_y^{\alpha}
(f_1-f_2)\right|\frac{n}{\rho^2}\right)\rho^2{\rm d}\rho\right|^2 {\rm
d}\gamma{\rm d}\kappa\cr
\le& \int_s^{+\infty} \int_{\p B_R} \kappa^{4}\left|\int_0^{2\pi}{\rm
d}\theta\int_0^{\pi}\sin\varphi{\rm d}\varphi\int_{R-r}^{2R}\frac{1}{4\pi
\kappa^n}\right.\cr
&\left(\left|\sum\limits_{|\alpha|=n}\p_y^{\alpha}(f_1-f_2)\right|\frac{1}{R-r}
\right.\left.\left.+\left|\sum\limits_{|\alpha|=n-1}\p_y^{\alpha}
(f_1-f_2)\right|\frac{n}{(R-r)^2}\right)\rho^2{\rm d}\rho\right|^2 {\rm
d}\gamma{\rm d}\kappa\cr
=&\int_s^{+\infty} \int_{\p B_R} \kappa^{4}\left|\int_0^{2\pi}{\rm
d}\theta\int_0^{\pi}\sin\varphi{\rm d}\varphi\int_{0}^{+\infty}\frac{1}{4\pi
\kappa^n}\right.\cr
&\left(\left|\sum\limits_{|\alpha|=n}\p_y^{\alpha}(f_1-f_2)\right|\frac{1}{R-r}
\right.\left.\left.+\left|\sum\limits_{|\alpha|=n-1}\p_y^{\alpha}
(f_1-f_2)\right|\frac{n}{(R-r)^2}\right)\rho^2{\rm d}\rho\right|^2 {\rm
d}\gamma{\rm d}\kappa.
\end{align*}
Changing back to the Cartesian coordinates with respect to $y$, we have
\begin{align}
L_1
\le& \int_s^{+\infty} \int_{\p B_R}
\kappa^{4}\left|\int_{\mathbb{R}^3}\frac{1}{4\pi \kappa^n}\right.\cr
&\left(\left|\sum\limits_{|\alpha|=n}\p_y^{\alpha}(f_1-f_2)\right|\frac{1}{R-r}
\right.\left.\left.+\left|\sum\limits_{|\alpha|=n-1}\p_y^{\alpha}
(f_1-f_2)\right|\frac{n}{(R-r)^2}\right)dy\right|^2 {\rm d}\gamma{\rm
d}\kappa\cr
\le& C n\|f_1-f_2\|_{H^n(B_R)}\int_s^{+\infty}\kappa^{4-2n}{\rm d}\kappa\cr
=&C\frac{n}{2n-5}\|f_1-f_2\|_{H^n(B_R)}\frac{1}{s^{2n-5}}\cr
\label{3.10}\le &3C\|f_1-f_2\|_{H^n(B_R)}\frac{1}{s^{2n-5}},\quad n\ge 3.
\end{align}
Next we estimate $L_2$ for $d=3$,
\begin{align*}
L_2=&\int_s^{+\infty} \int_{\p B_R} \kappa^{2} |\p_{\nu}u(x,\kappa)|^2{\rm
d}\gamma{\rm d}\kappa\cr
=&\int_s^{+\infty} \int_{\p B_R}
\kappa^{4}\left|\int_{\mathbb{R}^3}\left(\na_y\frac{e^{{\rm
i}\kappa|x-y|}}{4\pi|x-y|}\cdot\nu\right)(f_1-f_2)(y){\rm d}y\right|^2 {\rm
d}\gamma{\rm d}\kappa.
\end{align*}
Noting that $\na_y\frac{e^{{\rm i}\kappa|x-y|}}{4\pi|x-y|}=-\na_x\frac{e^{{\rm
i}\kappa|x-y|}}{4\pi|x-y|}$ and ${\rm supp}f_j\subset B_R$, we have
\begin{align*}
L_2=&\int_s^{+\infty} \int_{\p B_R} \kappa^{2} |\p_{\nu}u(x,\kappa)|^2{\rm
d}\gamma{\rm d}\kappa\cr
=&\int_s^{+\infty} \int_{\p B_R}
\kappa^{2}\left|\int_{\mathbb{R}^3}\left(\na_y\frac{e^{{\rm
i}\kappa|x-y|}}{4\pi|x-y|}\cdot\nu\right)(f_1-f_2)(y){\rm d}y\right|^2 {\rm
d}\gamma{\rm d}\kappa\cr
=&\int_s^{+\infty} \int_{\p B_R}
\kappa^{2}\left|\int_{\mathbb{R}^3}\frac{e^{{\rm
i}\kappa|x-y|}}{4\pi|x-y|}\left(\na_y(f_1-f_2)(y)\cdot\nu\right){\rm
d}y\right|^2 {\rm d}\gamma{\rm d}\kappa.
\end{align*}
Following a similar argument as that for the proof of (\ref{3.10}), we can
obtain
\begin{align}\label{3.11}
L_2\le C
n\|f_1-f_2\|_{H^{n+1}(B_R)}\int_s^{+\infty}\kappa^{2-2n}{\rm
d}\kappa=C\frac{n}{2n-3} \|f_1-f_2\|_{ H^{n+1}(B_R)}\frac{1}{s^{2n-3}},\quad
n\ge 2.
\end{align}
Combining (\ref{3.10})--(\ref{3.11}) and noting $s>1$, we obtain (\ref{3.9}) for
$d=3$.

When $d=2$, we have
\begin{align*}
L_1=&\int_s^{+\infty} \int_{\p B_R} \kappa^{3}|u(x, \kappa)|^2 {\rm d}\gamma{\rm
d}\kappa\cr
=&\int_s^{+\infty} \int_{\p B_R} \kappa^{3}\left|\int_{\mathbb{R}^2}\frac{{\rm
i}}{4}H_0^{(1)}(\kappa|x-y|)(f_1-f_2)(y){\rm d}y\right|^2 {\rm d}\gamma{\rm
d}\kappa.
\end{align*}
The Hankel function can also be expressed by the following integral when $t>0$
(e.g., \cite{Wa}, Chapter VI):
\begin{align*}
H_0^1(t)=\frac{2}{{\rm i}\pi}\int_0^{+\infty}e^{{\rm
i}ts}(s^2-1)^{-1/2}{\rm d}s.
\end{align*}
Using the polar coordinates $\rho=|y-x|$ originated at $x$ with respect to $y$,
we have
\begin{align*}
L_1=\int_s^{+\infty} \int_{\p B_R} \kappa^{3}\left|\int_0^{2\pi}{\rm
d}\theta\int_0^{+\infty}\frac{1}{4}H_0^{(1)}(\kappa\rho)(f_1-f_2)\rho{\rm
d}\rho\right|^2 {\rm d}\gamma{\rm d}\kappa.
\end{align*}
Let
\begin{align}\label{3.12}
H_n(t)=\frac{2}{{\rm i}\pi}\int_0^{+\infty}\frac{e^{{\rm i}ts}}{({\rm
i}s)^n(s^2-1)^{1/2}}{\rm d}s, \quad n=1,2,\cdots.
\end{align}
It is clear to note that
\[
 H_0(t)=H_0^{(1)}(t)\quad\text{and}\quad \frac{{\rm d}
H_n(t)}{{\rm d}t}=H_{n-1}(t), ~t>0, ~n\in\mathbb{N}.
\]
Using integration by parts and noting ${\rm supp}f_j\subset B_r\subset B_R$, we obtain
\begin{align*}
L_1
=&\int_s^{+\infty} \int_{\p B_R} \kappa^{3}\left|\int_0^{2\pi}{\rm
d}\theta\int_{R-r}^{2R}\frac{H_1(\kappa\rho)}{4\kappa^2}\frac{\p
(f_1-f_2)\rho}{\p\rho}{\rm d}\rho\right|^2 {\rm d}\gamma{\rm d}\kappa\cr
=&\int_s^{+\infty} \int_{\p B_R} \kappa^{3}\left|\int_0^{2\pi}{\rm
d}\theta\int_{R-r}^{2R}\frac{H_n(\kappa\rho) }{4\kappa^{n+1}}\frac{\p^n
(f_1-f_2)\rho}{\p \rho^n}{\rm d}\rho\right|^2 {\rm d}\gamma{\rm d}\kappa.
\end{align*}
Consequently, we have
\begin{align*}
L_1\le&\int_s^{+\infty} \int_{\p B_R} \kappa^{3}\left|\int_0^{2\pi}{\rm
d}\theta\int_{R-r}^{2R}\left|\frac{H_n(\kappa\rho)
}{4\kappa^{n+1}}\right|\left|\frac{\p^n (f_1-f_2)\rho}{\p \rho^n}\right|{\rm
d}\rho\right|^2 {\rm d}\gamma{\rm d}\kappa\cr
\le&\int_s^{+\infty} \int_{\p B_R} \kappa^{3}\left|\int_0^{2\pi}{\rm
d}\theta\int_{R-r}^{2R}\left|\frac{H_n(\kappa\rho) }{4\kappa^{n+1}}\right|
\right.\cr
&\left(\left|\sum\limits_{|\alpha|=n}\p_y^{\alpha}
(f_1-f_2)\right|\right.\left.\left.+\left|\sum\limits_{|\alpha|=n-1}\p_y^{\alpha
}(f_1-f_2)\right|\frac{n}{\rho}\right)\rho{\rm d}\rho\right|^2 {\rm d}\gamma{\rm
d}\kappa\cr
\le&\int_s^{+\infty} \int_{\p B_R} \kappa^{3}\left|\int_0^{2\pi}{\rm
d}\theta\int_{R-r}^{2R}\left|\frac{H_n(\kappa\rho) }{4\kappa^{n+1}}\right|
\right.\cr
&\left(\left|\sum\limits_{|\alpha|=n}\p_y^{\alpha}
(f_1-f_2)\right|\right.\left.\left.+\left|\sum\limits_{|\alpha|=n-1}\p_y^{\alpha
}(f_1-f_2)\right|\frac{n}{R-r}\right)\rho{\rm d}\rho\right|^2 {\rm d}\gamma{\rm
d}\kappa.
\end{align*}
Noting (\ref{3.12}), we see that there exists a constant $C>0$ such that
$|H_n(\kappa\rho)|\le C$ for $n\ge 1$. Hence,
\begin{align*}
L_1\le&\int_s^{+\infty} \int_{\p B_R} \kappa^{3}\left|\int_0^{2\pi}{\rm
d}\theta\int_{R-r}^{2R}\frac{C }{4\kappa^{n+1}}
\right.\cr
&\left(\left|\sum\limits_{|\alpha|=n}\p_y^{\alpha}(f_1-f_2)\right|\right.\left.\left.
+\left|\sum\limits_{|\alpha|=n-1}\p_y^{\alpha}(f_1-f_2)\right|\frac{n}{R-r}
\right)\rho{\rm d}\rho\right|^2 {\rm d}\gamma{\rm d}\kappa.
\end{align*}
Changing back to the Cartesian coordinates with respect to $y$, we have
\begin{align}
L_1\le&\int_s^{+\infty} \int_{\p B_R}
\kappa^{3}\left|\int_{B_R}\frac{C}{4\kappa^{n+1}}
\right.\cr
&\left(\left|\sum\limits_{|\alpha|=n}\p_y^{\alpha}(f_1-f_2)\right|\right.\left.\left.
+\left|\sum\limits_{|\alpha|=n-1}\p_y^{\alpha}(f_1-f_2)\right|\frac{n}{R-r}
\right){\rm d}x\right|^2 {\rm d}\gamma{\rm d}\kappa\cr
\label{3.13}\le& C
n\|f_1-f_2\|_{H^n(B_R)}\int_s^{+\infty}\kappa^{1-2n}{\rm
d}\kappa=C\frac{n}{2n-2} \|f_1-f_2\|_ {H^n(B_R)}\frac{1}{s^{2n-2}}.
\end{align}
Next we estimate $L_2$ for $d=2$. A simple calculation yields
\begin{align*}
L_2=&\int_s^{+\infty} \int_{\p B_R} \kappa^{2} |\p_{\nu}u(x,\kappa)|^2{\rm
d}\gamma{\rm d}\kappa\cr
=&\int_s^{+\infty} \int_{\p B_R}
\kappa^{4}\left|\int_{\mathbb{R}^3}\left(\frac{\rm i}{4}\na_y
H_0^{(1)}(\kappa|x-y|)\cdot\nu\right)(f_1-f_2)(y){\rm d}y\right|^2 {\rm
d}\gamma{\rm d}\kappa.
\end{align*}
Noting that $\na_y H_0^{(1)}(\kappa|x-y|)=-\na_x H_0^{(1)}(k|x-y|)$ and
${\rm supp}f_j\subset B_r\subset B_R$, we have
\begin{align*}
L_2=&\int_s^{+\infty} \int_{\p B_R} \kappa^{2} |\p_{\nu}u(x,\kappa)|^2{\rm
d}\gamma{\rm d}\kappa\cr
=&\int_s^{+\infty} \int_{\p B_R}
\kappa^{2}\left|\int_{\mathbb{R}^3}\left(\frac{\rm
i}{4}\na_y H_0^{(1)}(\kappa|x-y|)\cdot\nu\right)(f_1-f_2)(y){\rm d}y\right|^2
{\rm d}\gamma{\rm d}\kappa\cr
=&\int_s^{+\infty} \int_{\p B_R}
\kappa^{2}\left|\int_{\mathbb{R}^3}\frac{\rm
i}{4}H_0^{(1)}(\kappa|x-y|)\left(\na_y(f_1-f_2)(y)\cdot\nu\right){\rm
d}y\right|^2 {\rm d}\gamma{\rm d}\kappa.
\end{align*}
Following a similar argument as the proof of (\ref{3.13}), we can obtain
\begin{align}
L_2
\le& C n\|f_1-f_2\|_{H^{n+1}(B_R)}\int_s^{+\infty}\kappa^{-2n}{\rm d}\kappa\cr
\label{3.14}=&C\frac{n}{2n-1}\|f_1-f_2\|_{H^{n+1}(B_R)}\frac{1}{s^{2n-1}}.
\end{align}
Combining (\ref{3.13}) and (\ref{3.14}) completes the proof of (\ref{3.9}) for
$d=2$.

\end{proof}

The following lemma is proved in \cite{CIL}.

\begin{lemm}
Let $J(z)$ be analytic in $S=\{z=x+{\rm i}y\in\mathbb{C}: -\frac{\pi}{4}<{\rm arg}
z<\frac{\pi}{4}\}$ and continuous in $\bar{S}$
satisfying
\[
 \begin{cases}
  |J(z)|\leq\epsilon, & z\in (0, ~ L],\\
  |J(z)|\leq V, & z\in S,\\
  |J(0)|=0.
 \end{cases}
\]
Then there exits a function $\mu(z)$ satisfying
\[
 \begin{cases}
  \mu(z)\geq\frac{1}{2},  & z\in(L, ~ 2^{\frac{1}{4}}L),\\
  \mu(z)\geq \frac{1}{\pi}((\frac{z}{L})^4-1)^{-\frac{1}{2}}, & z\in
(2^{\frac{1}{4}}L, ~ \infty)
 \end{cases}
\]
such that
\[
|J(z)|\leq V\epsilon^{\mu(z)}, \quad\forall\, z\in (L, ~ \infty).
\]
\end{lemm}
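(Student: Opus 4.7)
The plan is to reduce the lemma to a harmonic-measure estimate in a right half plane by two successive conformal changes of variable, and then invoke the Hadamard two-constants theorem. First, compose with the principal square root: the map $w=z^{2}$ is a biholomorphism of $S$ onto the right half plane $H=\{w:\mathrm{Re}\,w>0\}$ that sends the positive real axis to itself. Hence $g(w):=J(\sqrt{w})$ is analytic in $H$, continuous on $\overline{H}$, satisfies $|g|\leq V$ in $H$, $|g|\leq\epsilon$ on the segment $(0,L^{2}]\subset H$, and $g(0)=0$.

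Next, introduce $\Phi(w)=\sqrt{w^{2}-L^{4}}$ with the branch that is positive on $(L^{2},\infty)$. A direct verification shows that $\Phi$ is a biholomorphism of the slit right half plane $H\setminus(0,L^{2}]$ onto the right half plane $\widetilde{H}=\{\zeta:\mathrm{Re}\,\zeta>0\}$: the two edges of the slit $(0,L^{2}]$ are sent to the segment $I=[-iL^{2},iL^{2}]$ of the imaginary axis, while the imaginary axis of $H$ is sent to $(i\mathbb{R})\setminus I$. Consequently the pushforward $G:=g\circ\Phi^{-1}$ is analytic in $\widetilde{H}$ with $|G|\leq\epsilon$ on $I$ and $|G|\leq V$ on $(i\mathbb{R})\setminus I$.

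Third, apply the two-constants theorem in $\widetilde{H}$, i.e., majorize the subharmonic function $\log|G|$ by the Poisson integral of its boundary values. This yields $|G(\zeta)|\leq V^{1-\omega(\zeta)}\epsilon^{\omega(\zeta)}$, where $\omega(\zeta)$ denotes the harmonic measure of $I$ in $\widetilde{H}$. For real $z\in(L,\infty)$, the point to be tracked is $\zeta=\Phi(z^{2})=\sqrt{z^{4}-L^{4}}>0$, and the Poisson kernel of $\widetilde{H}$ gives the closed form
\[
\omega(\zeta)=\frac{2}{\pi}\arctan\!\left(\frac{L^{2}}{\sqrt{z^{4}-L^{4}}}\right)=\frac{2}{\pi}\arctan\!\left(\left((z/L)^{4}-1\right)^{-1/2}\right).
\]
Define $\mu(z):=\omega(\zeta)$; replacing $V^{1-\mu(z)}$ by the weaker $V$ (legitimate once $V\geq 1$, which is the regime in which the lemma is applied in the paper) delivers the stated bound $|J(z)|\leq V\epsilon^{\mu(z)}$.

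Finally, extract the piecewise lower bounds on $\mu$. When $z\in(L,\,2^{1/4}L)$, the argument $t:=((z/L)^{4}-1)^{-1/2}$ exceeds $1$, so $\mu(z)\geq\tfrac{2}{\pi}\arctan 1=\tfrac{1}{2}$. When $z\geq 2^{1/4}L$, one has $t\leq 1$; the elementary inequality $\arctan t\geq t/2$ on $[0,1]$ (the difference vanishes at $0$ and has derivative $(1-t^{2})/(2(1+t^{2}))\geq 0$ there) then gives $\mu(z)\geq\tfrac{t}{\pi}=\tfrac{1}{\pi}((z/L)^{4}-1)^{-1/2}$. The main obstacle is the second step, namely verifying in detail that $\Phi$ is a biholomorphism of the slit right half plane onto $\widetilde{H}$ and correctly identifying which boundary arc of $\widetilde{H}$ inherits which of the two bounds. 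Once this conformal picture is in place, the Poisson-integral computation and the elementary arctangent inequalities are routine.
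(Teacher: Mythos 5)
The paper does not prove this lemma at all: it is quoted verbatim from \cite{CIL} with the remark ``The following lemma is proved in \cite{CIL}'', so there is no in-paper argument to compare against. Your proof is correct and is, in substance, the standard harmonic-measure derivation that underlies the cited result: the chain $z\mapsto z^{2}\mapsto\sqrt{w^{2}-L^{4}}$ does biholomorphically flatten $S$ with the interior slit $(0,L]$ onto a half plane whose boundary splits into the arc $I$ (carrying the bound $\epsilon$) and its complement (carrying $V$), the two-constants theorem applies because $|G|\le V$ uniformly supplies the needed Phragm\'en--Lindel\"of control at infinity, and the Poisson-kernel evaluation $\mu(z)=\frac{2}{\pi}\arctan\bigl(((z/L)^{4}-1)^{-1/2}\bigr)$ together with $\arctan t\ge \pi/4$ for $t\ge 1$ and $\arctan t\ge t/2$ for $t\in[0,1]$ yields exactly the two stated lower bounds --- which explains where the otherwise mysterious constants $\tfrac12$ and $\tfrac1\pi((z/L)^4-1)^{-1/2}$ come from. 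The only caveat, which you already flag, is that discarding the factor $V^{-\mu(z)}$ requires $V\ge 1$; this is harmless here since the lemma is invoked with $V=CM^{2}$ and $M>1$, and the hypothesis $|J(0)|=0$ is in fact not needed for this route.
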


\begin{lemm}
 Let $f_j\in\mathcal{C}_M, j=1,2$. Then there exists a function
$\mu(z)$ satisfying
\begin{equation}\label{mu}
 \begin{cases}
  \mu(s)\geq\frac{1}{2}, \quad & s\in(K, ~ 2^{\frac{1}{4}}K),\\
  \mu(s)\geq \frac{1}{\pi}((\frac{s}{K})^4-1)^{-\frac{1}{2}}, \quad & s\in
(2^{\frac{1}{4}}K, ~\infty),
 \end{cases}
\end{equation}
such that
\[
 |I_1(s)+I_2(s)|\leq CM^2 e^{(4R+1)s}\epsilon^{2\mu(s)},\quad\forall s\in (K, ~\infty),
\]
for $d=2,3$.
\end{lemm}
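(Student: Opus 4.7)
The plan is to apply Lemma 3.4 to the weighted function
\[
J(s)=e^{-(4R+1)s}\bigl(I_1(s)+I_2(s)\bigr),
\]
which inherits analyticity on $S$ and the vanishing at $s=0$ from $I_1$ and $I_2$. I would then verify the two hypotheses of that lemma---an $\epsilon^{2}$ bound on the real segment $(0,K]$ and a uniform constant bound on the whole sector $S$---and recover the claim by multiplying the resulting pointwise estimate by $e^{(4R+1)s}$.

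For the segment $(0,K]$, note that when $\kappa$ is real the inner integrals in the definitions of $I_1$ and $I_2$ produce precisely $(u_1-u_2)(\cdot,\kappa)$ and $\p_\nu(u_1-u_2)(\cdot,\kappa)$ on $\p B_R$ via the free-space Green's function. Using $\p_\nu u=\mathscr{B}u$ on $\p B_R$ and the definition of $\|\cdot\|_{\p B_R}$, one finds
\[
I_1(s)+I_2(s)=\int_0^s\kappa^{d-1}\|(u_1-u_2)(\cdot,\kappa)\|_{\p B_R}\,{\rm d}\kappa\le \epsilon^2,\quad s\in(0,K].
\]
Since $|e^{-(4R+1)s}|\le 1$ on this segment, $|J(s)|\le\epsilon^{2}$ there.

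For the full sector, Lemma 3.2 together with $\|f_1-f_2\|_{H^{n+1}(B_R)}\le 2M$ yields an estimate of the form $|I_1(s)+I_2(s)|\le CM^2 P(|s|)\,e^{4R|s_2|}$, where $P$ is a fixed polynomial (in both dimensions). The sectorial condition $|\arg s|<\pi/4$ forces $|s_2|\le s_1$ and $|s|\le\sqrt{2}\,s_1$, so the modulus $e^{-(4R+1)s_1}$ of the weight beats the factor $e^{4R|s_2|}$ by a surplus $e^{-s_1}$, which in turn absorbs $P(\sqrt{2}\,s_1)$. This delivers a uniform bound $|J(s)|\le V:=C'M^2$ on $S$. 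The main obstacle is exactly this balancing act: the multiplier $e^{-(4R+1)s}$ must be fine-tuned so as to kill the sectorial growth of $I_1+I_2$ without spoiling the real-axis smallness, and it succeeds only thanks to the opening angle of $S$. With both hypotheses verified, Lemma 3.4 applied with $L=K$ and with the ``$\epsilon$'' there taken to be our $\epsilon^{2}$ produces $|J(s)|\le V\epsilon^{2\mu(s)}$ for $s\in(K,\infty)$, where $\mu$ satisfies \eqref{mu}, and multiplying through by $e^{(4R+1)s}$ yields the claimed inequality.
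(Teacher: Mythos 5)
Your proposal is correct and follows essentially the same route as the paper: weight $I_1+I_2$ by $e^{-(4R+1)s}$, bound it by $\epsilon^2$ on $(0,K]$ (where $I_1+I_2$ coincides with the data integral defining $\epsilon^2$) and by $CM^2$ on the sector via Lemma 3.2, then apply the analytic-continuation lemma from \cite{CIL} with $L=K$. Your explicit verification that the sector condition $|s_2|\le s_1$ lets $e^{-(4R+1)s_1}$ absorb both $e^{4R|s_2|}$ and the polynomial growth is exactly the (unstated) reason the paper's choice of weight works.
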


\begin{proof}
 It follows from Lemma 3.2 that
\[
 |[I_1(s)+I_2(s)]e^{-(4R+1)s}|\leq CM^2,\quad\forall s\in S.
\]
Recalling \eqref{e1}, \eqref{3.1}-\eqref{3.4}, we have
\[
 |[I_1(s)+I_2(s)]e^{-(4R+1)s}|\leq\epsilon^2,\quad s\in [0, ~K].
\]
A direct application of Lemma 3.5 shows that there exists a function $\mu(s)$
satisfying \eqref{mu} such that
\[
 |[I_1(s)+I_2(s)]e^{-(4R+1)s}|\leq CM^2\epsilon^{2\mu},\quad\forall s\in (K, ~\infty),
\]
which completes the proof.
\end{proof}

Now we show the proof of Theorem 2.1.

\begin{proof}
 We can assume that $\epsilon<e^{-1}$, otherwise the estimate is
obvious. Let
\[
s=\begin{cases}
\frac{1}{((4R+3)\pi)^{\frac{1}{3}}}K^{\frac{2}{3}}|\ln\epsilon|^{\frac{1}{4}},
&
2^{\frac{1}{4}}
((4R+3)\pi)^{\frac{1}{3}}K^{\frac{1}{3}}<|\ln\epsilon|^{\frac{1}{4}},\\
K, &|\ln\epsilon|\leq 2^{\frac{1}{4}}((4R+3)\pi)^{\frac{1}{3}}K^{\frac{1}{3}}.
 \end{cases}
\]
If
$2^{\frac{1}{4}}(((4R+3)\pi)^{\frac{1}{3}}K^{\frac{1}{3}}<|\ln\epsilon|^{\frac{
1}{4}}$, then we have
\begin{align*}
 |I_1(s)+I_2(s)|&\leq CM^2 e^{(4R+3)s}
e^{-\frac{2|\ln\epsilon|}{\pi}((\frac{s}{K})^4-1)^{-\frac{1}{2}}}\cr
&\leq CM^2
e^{\frac{(4R+3)}{((4R+3)\pi)^{\frac{1}{3}}}K^{\frac{2}{3}}|\ln\epsilon|^{\frac{
1}{4}}-\frac{2|\ln\epsilon|}{\pi}
(\frac{K}{s})^2}\\
&=CM^2 e^{-2\left(\frac{(4R+3)^2}{\pi}\right)^{\frac{1}{3}}K^{\frac{2}{3}}
|\ln\epsilon|^{\frac{1}{2}}\left(1-\frac{1}{2}
|\ln\epsilon|^{-\frac{1}{4}}\right)}.
\end{align*}
Noting that $\frac{1}{2} |\ln\epsilon|^{-\frac{1}{4}}<\frac{1}{2}$,
$\left(\frac{(4R+3)^2}{\pi}\right)^{\frac{1}{3}}>1$  we have
\begin{align*}
 |I_1(s)+I_2(s)|&
\leq CM^2
e^{-K^{\frac{2}{3}}|\ln\epsilon|^{\frac{1}{2}}}.
\end{align*}
Using
the elementary inequality
\[
 e^{-x}\leq \frac{(6n-6d+3)!}{x^{3(2n-2d+1)}}, \quad x>0,
\]
we get
\begin{align}\label{3.16}
 |I_1(s)+I_2(s)|\leq\frac{CM^2}{\left(\frac{K^2|\ln\epsilon|^{\frac{3}{2}}}{
(6n-6d+3)^3}\right)^{2n-2d+1}}.
\end{align}
If $|\ln\epsilon|\leq
2^{\frac{1}{4}}(((4R+3)\pi)^{\frac{1}{3}}K^{\frac{1}{3}}$, then $s=K$. We have
from \eqref{e1}, \eqref{3.1}-\eqref{3.4}  that
\[
 |I_1(s)+I_2(s)|\leq \epsilon^2,
\]
Here we have noted that for $s>0$, $I_1(s)+I_2(s)=\int_0^s \int_{\p B_R}\kappa^{d-1} \bigl(|\p_{\nu}u(x,\kappa)|^2
+\kappa^2|u(x, \kappa)|^2 \bigr){\rm d}\gamma{\rm d}\kappa$.
Hence we obtain from Lemma 3.3 and \eqref{3.16} that
\begin{align*}
 &\int_0^\infty \int_{\p B_R}\kappa^{d-1} \bigl(|\p_{\nu}u(x,\kappa)|^2
+\kappa^2|u(x, \kappa)|^2 \bigr){\rm d}\gamma{\rm d}\kappa\\
&\leq I_1(s)+I_2(s)+\int_s^\infty \int_{\p B_R}\kappa^{d-1} \bigl(|\p_{\nu}u(x,\kappa)|^2
+\kappa^2|u(x, \kappa)|^2 \bigr){\rm d}\gamma{\rm d}\kappa\\
&\leq
\epsilon^2+\frac{CM^2}{\left(\frac{K^2|\ln\epsilon|^{\frac{3}{2}}}{(6n-6d+3)^3}
\right)^{2n-2d+1}}+\frac{C
\|f_1-f_2\|^2_{H^{n+1}(B_R)}}{\left(2^{-\frac{1}{4}}((4R+3)\pi)^{-\frac{1}{3}}K^{
\frac{2}{3}} |\ln\epsilon|^{\frac{1}{4}}\right)^{2n-2d+1}}.
\end{align*}
By Lemma 3.1, we have
\[
 \|f_1-f_2\|^2_{L^2(B_R)}\leq C \left(\epsilon^2
+\frac{M^2}{\left(\frac{K^2|\ln\epsilon|^{\frac{3}{2}}}{(6n-6d+3)^3}\right)^{
2n-2d+1}}+\frac{M^2}{\left(\frac{K^{\frac{2}
{3}}|\ln\epsilon|^{\frac{1}{4}}}{(R+1)(6n-6d+3)^3}\right)^{2n-2d+1}}\right).
\]
Since $K^{\frac{2}{3}}|\ln\epsilon|^{\frac{1}{4}}\leq K^2
|\ln\epsilon|^{\frac{3}{2}}$ when $K>1$ and $|\ln\epsilon|>1$, we obtain
the stability estimate.
\end{proof}

\end{document}